\newtheorem{theorem}{Theorem}[section]
\theoremstyle{definition}
\theoremstyle{remark}
\numberwithin{equation}{section}
\newcommand{\ep}{\varepsilon}
\newcommand{\rr}{{\mathbb R}}
\def\R{{\mathbb R}}
\def\x{{\bf x}}
\def\ep{\varepsilon}
\def\P{{\mathbb P}}
\newcommand{\Exp}{\mathbb E}
\newcommand{\Var}{\operatorname{Var}}
\begin{document}

\title{\bf Correlated continuous time random walks}

\author{Mark M. Meerschaert}
\address{Mark M. Meerschaert, Department of Probability and Statistics,
Michigan State University, East Lansing, MI 48823}
\email{mcubed@stt.msu.edu}
\urladdr{http://www.stt.msu.edu/$\sim$mcubed/}
\thanks{Research of M. M. Meerschart was partially
supported by NSF grant DMS-0706440.}

\author{Erkan Nane}
\address{Erkan Nane, Department of Mathematics and Statistics, 221
Parker Hall, Auburn University, Auburn, AL 36849}
\email{nane@stt.msu.edu}
\urladdr{http://www.stt.msu.edu/$\sim$nane}

\author{Yimin Xiao}
\address{Yimin Xiao, Department Statistics and Probability,
Michigan State University, East Lansing, MI 48823}
\email{xiao@stt.msu.edu}
\urladdr{http://www.stt.msu.edu/$\sim$xiaoyimi}
\thanks{Research of Y. Xiao was partially supported by
NSF grant DMS-0706728.}

\begin{abstract}
Continuous time random walks impose a random waiting time before each particle jump.
Scaling limits of heavy tailed continuous time random walks are governed by fractional
evolution equations.  Space-fractional derivatives describe heavy tailed jumps, and
the time-fractional version codes heavy tailed waiting times.  This paper develops
scaling limits and governing equations in the case of correlated jumps.  For long-range
dependent jumps, this leads to fractional Brownian motion or linear fractional stable
motion, with the time parameter replaced by an inverse stable subordinator in the case
of heavy tailed waiting times. These scaling limits provide an interesting class of
non-Markovian, non-Gaussian self-similar processes.
\end{abstract}

\keywords{Fractional Brownian motion, L\'{e}vy process, strictly stable
process, linear fractional stable motion, waiting time process, local time, self-similarity,
scaling limit.}

\maketitle
\section{Introduction}
Continuous time random walks (CTRW) separate IID particle jumps $(Y_n)$ by IID waiting
times $(J_n)$.  CTRW models are important in applications to geology \cite{Berkowitz2006},
physics \cite{MetzlerKlafter}, and finance \cite{scalas1}. In the case of heavy tailed
waiting times, CTRW scaling limits are subordinated processes that are self-similar but
non-Markovian \cite{limitCTRW}.  Their transition densities are governed by fractional
diffusion equations \cite{Zsolution}.  Fractional diffusion equations replace the usual
integer order derivatives in the diffusion equation by their fractional-order analogues
\cite{MillerRoss, Samko}.  Just as the diffusion equation $\partial_t u=a\partial_x^2 u$
governs the scaling limit of a simple random walk, the fractional diffusion equation
$\partial_t^\beta u=a\partial_x^\alpha u$ governs the scaling limit of a CTRW with heavy
tail jumps $\P(Y_n>r)\sim r^{-\alpha}$ for $0<\alpha<2$ and waiting times $\P(J_n>t)\sim
t^{-\beta}$ for $0<\beta<1$.

This paper develops limit theorems and governing equations for CTRW with correlated jumps
$Y_n=\sum_j c_j Z_{n-j}$, where $(Z_n)$ are IID and  $(c_n)$ are real numbers (see Section
2 for precise conditions). These CTRW
models are useful for correlated observations separated by random waiting times, which
are common, for example, in finance \cite{SGM}. Scaling limits of the partial sum process
$S(t)=Y_1+\cdots+Y_{[t]}$ in the case of long range dependence include fractional Brownian
motion (FBM) for light-tailed jumps \cite{davydov,whitt}, and linear fractional stable
motion (LFSM) for heavy-tailed jumps \cite{Astrauskus,kasahara-maejima,whitt}. Letting
$T_n =J_1+\cdots+J_n$ the time of the $n$th jump,
and $N_t=\max\{n:T_n\leq t\}$ the number of jumps by time $t>0$, the scaling limit
of the CTRW $S(N_t)$ is a FBM or LFSM subordinated to an inverse stable subordinator,
which is connected to the local time of a strictly stable L\'evy process \cite{MNX},
or the supremum process of a spectrally negative stable L\'evy process \cite{Bingham73}.
This extends the results of \cite{limitCTRW,CTRW} to the case of dependent jumps.
We also discuss some interesting properties of these self-similar limit processes,
and governing equations for their probability densities.

\bigskip

\section{Results}


Let $\{Z_n, -\infty < n < \infty\}$ denote IID random variables that
belong to the strict domain of attraction of some stable law $A$
with index $0<\alpha\leq 2$. This
means that the sequence of partial sums $P(n)=Z_1+\cdots+Z_n$ satisfies
$a_nP(n)\Rightarrow A$ for some $a_n>0$, see Feller \cite[p.312--313]{Feller}
or Whitt \cite[p.114--115]{whitt}.  

The particle jumps that we consider in this paper are given by the stationary
linear process $\{Y_n, -\infty < n < \infty\}$ defined by
$Y_n= \sum_{j=0}^\infty c_j Z_{n-j}$, where $c_j$ are real constants
such that $\sum_{j=0}^\infty |c_j|^\rho < \infty$ for some $\rho \in (0, \alpha)$.
This condition ensures that the the series $\sum_{j=0}^\infty c_j Z_{n-j}$
converges in $L^\rho(\P)$ and almost surely (see Avram and Taqqu \cite{AvramTaqqu92}).

Let $J_n>0$ be IID waiting times, $T_n=J_1+\cdots+J_n$ the time of the $n$th
particle jump, and $N_t=\max\{n:T_n\leq t\}$ the
number of jumps by time $t>0$.  Let $S(n)=Y_1+\cdots+Y_n$ denote the location
of the particle after $n$ jumps, so that the continuous time random walk (CTRW)
$S(N_t)$ gives the location of the particle at time $t>0$.  Suppose that
$J_n$ belongs to the domain of attraction of
some stable law $D$ with index $0<\beta< 1$ and $D > 0$ almost surely. Hence
$b_nT_n\Rightarrow D$ for some norming
constants $b_n>0$.  Let $b(t)=b_{[t]}$ and take $\tilde b(t)$ an asymptotic inverse of
the regularly varying function $1/b(t)$, so that $tb(\tilde b(t))\to 1$ as $t\to\infty$
\cite{limitCTRW}.

Let $\{A(t), t\ge 0\}$ and $\{D(t), t \ge 0\}$ be stable L\'evy processes with $A(1) =A,
D(1) = D$, respectively. Note that $\{D(t), t \ge 0\}$ is a stable subordinator
of index $\beta$, hence its sample functions are almost surely
strictly increasing \cite[p.75]{Bertoin96}. Therefore, the inverse or hitting time process
of $\{D(t), t \ge 0\}$,
$$
E_t=\inf\{x>0:D(x)>t\}, \qquad \forall t \ge 0
$$
is well defined and the function $t \mapsto E_t$ is strictly increasing almost surely.

Our first result shows that the CTRW scaling limit in the case of short-range dependence
is quite similar to the case of independent jumps studied by Meerschaert and
Scheffler \cite{limitCTRW}.

\begin{theorem}\label{SRDtheorem}
Under the conditions of this section, suppose that $0 < \alpha<2$, $c_j\geq 0$
and $\sum_j c_j^\rho<\infty$ for some $\rho\in(0,\alpha)$ with $\rho\leq 1$,
and that one of the following holds:
\begin{enumerate}
\item[(a)] $0<\alpha\leq 1$; or
\item[(b)] $c_j=0$ for all but finitely many $j$; or
\item[(c)] $1< \alpha <2$, $c_j$ is monotone and $\sum_j c_j^\rho<\infty$
for some $\rho<1$.
\end{enumerate}
Then we have
\begin{equation}\label{SRDconv}
w^{-1} a_{[\tilde b(c)]} S(N_{ct})\Rightarrow A(E_t)
\end{equation}
as $c\to\infty$ in the $M_1$ topology on $D([0,\infty),\R)$, where $w=\sum_{j} c_j$.
\end{theorem}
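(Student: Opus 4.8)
The plan is to reduce to the independent-jump case of Meerschaert and Scheffler \cite{limitCTRW} in two moves: first a functional limit theorem for the partial sums of the linear process $(Y_n)$, and then a continuous-mapping argument that composes this limit with the scaling limit of the renewal counting process $N_t$.

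\emph{Partial sums of the moving average.} Since $(Z_n)$ is in the strict domain of attraction of $A$, the rescaled random walk satisfies $a_n P([n\cdot])\Rightarrow A(\cdot)$ in $D([0,\infty),\R)$ with the $J_1$ topology, and I would upgrade this to
\[
a_{[\tilde b(c)]}\, S\bigl([\tilde b(c)\,\cdot\,]\bigr)\ \Longrightarrow\ w\,A(\cdot)\qquad(c\to\infty)
\]
in $D([0,\infty),\R)$ with the $M_1$ (rather than $J_1$) topology; this is the moving-average Donsker theorem in the stable domain of attraction, following Avram and Taqqu \cite{AvramTaqqu92}. The mechanism is that a single large summand $Z_{i_0}$ enters $S(n)=\sum_{k\le n}\sum_j c_jZ_{k-j}$ with coefficient $\sum_{l=0}^{n-i_0}c_l$, which, because $c_j\ge0$, increases monotonically from $c_0$ up to $w$ as $n$ passes $i_0$; thus $Z_{i_0}$ produces a monotone \emph{ramp} of total height $\approx wZ_{i_0}$, which after the time-rescaling by $\tilde b(c)^{-1}$ collapses onto a single jump of the limit $wA$ — this is exactly why $M_1$, not $J_1$, is the right topology. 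Concretely I would truncate to the finite moving average $Y_n^{(m)}=\sum_{j\le m}c_jZ_{n-j}$ (where the ramps have width $\le m$ and convergence to $w_m A$, $w_m=\sum_{j\le m}c_j$, is elementary) and then let $m\to\infty$, using $\sum_j c_j^\rho<\infty$ to bound the tail $S-S^{(m)}$ uniformly over the relevant window. Hypotheses (a)--(c) are precisely the regimes in which this last estimate can be carried out: for $\alpha\le1$ the $L^\rho$-bound with $\rho<\alpha$ suffices, (b) is the trivial finite case, and for $1<\alpha<2$ the monotonicity of $(c_j)$ controls the edge sums $\sum_i Z_i\bigl(\sum_{j\ge[n\cdot]+1-i}c_j\bigr)$ that arise in comparing $S$ with $wP$.

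\emph{Composition with the inverse subordinator.} Since $J_n$ lies in the domain of attraction of the $\beta$-stable law $D$, \cite{limitCTRW} gives $\tilde b(c)^{-1}N_{c\,\cdot\,}\Rightarrow E_\cdot$, and because $N$ is nondecreasing while $E$ is a.s.\ continuous and strictly increasing, this holds uniformly on compact time sets. As $(Y_n)$ and $(J_n)$ are independent, this combines with the previous step to the joint convergence $\bigl(a_{[\tilde b(c)]}S([\tilde b(c)\,\cdot\,]),\ \tilde b(c)^{-1}N_{c\,\cdot\,}\bigr)\Rightarrow\bigl(wA(\cdot),E_\cdot\bigr)$. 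I would then apply the continuity of the composition map $(x,e)\mapsto x\circ e$ from $(D,M_1)\times(D,\mathrm{unif})$ into $(D,M_1)$ at all pairs $(x,e)$ with $e$ continuous and strictly increasing \cite{whitt} (as used in \cite{limitCTRW}); the pair $(wA,E)$ is a.s.\ of this form. Because $N_{ct}$ is integer valued, $\bigl[\tilde b(c)\cdot(N_{ct}/\tilde b(c))\bigr]=N_{ct}$, so evaluating the first coordinate at the random time $N_{ct}/\tilde b(c)$ yields exactly $a_{[\tilde b(c)]}S(N_{ct})$; the continuous mapping theorem then gives $a_{[\tilde b(c)]}S(N_{c\,\cdot\,})\Rightarrow wA(E_\cdot)$ in $M_1$, and dividing by $w$ gives \eqref{SRDconv}.

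\emph{Main obstacle.} The crux is the first step: turning the "ramp $\to$ jump" heuristic into a rigorous $M_1$ limit for the \emph{infinite} moving average, uniformly over a window of length growing like $\tilde b(c)$. The delicate points are the ramps still incomplete near the right end of the window and the need to know that the effective support of $(c_j)$ is negligible at scale $\tilde b(c)$ — this is exactly what forces the case split (a)/(b)/(c). The composition step is, by contrast, essentially bookkeeping resting on \cite{limitCTRW}, the independence of jumps and waiting times, and the $M_1$ composition lemma of \cite{whitt}.
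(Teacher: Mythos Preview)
Your approach is correct and matches the paper's proof essentially line for line: invoke the $M_1$ invariance principle for partial sums of the moving average to get $a_nS(n\cdot)\Rightarrow wA(\cdot)$, use \cite[Corollary~3.4]{limitCTRW} for $\tilde b(c)^{-1}N_{c\cdot}\Rightarrow E_\cdot$, combine via independence, and finish with the $M_1$ composition lemma \cite[Theorem~13.2.4]{whitt}. The only difference is that where you sketch the Avram--Taqqu truncation/ramp argument for the first step, the paper simply cites it as \cite[Theorem~4.7.1]{whitt}; hypotheses (a)--(c) are exactly the conditions of that theorem, so your identification of them as what makes the tail estimate go through is right, but you need not redo the work.
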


In view of Avram and Taqqu \cite{AvramTaqqu92}, the convergence in \eqref{SRDconv}
cannot be strengthened to the $J_1$ topology. Note that the processes $\{A(t),
t\ge 0\}$ and $\{E_t, t \ge 0\}$ are self-similar, that is, for every constant
$c > 0$
$$ \{A(ct), t \ge 0\} \stackrel{d} {=}
\{ c^{1/\alpha} A(t), t \ge 0\}$$
and
$$ \{E_{ct}, t \ge 0\} \stackrel{d} {=} \{c^\beta E_t, t \ge 0\},$$
where $\stackrel{d} {=}$ means equality in all finite dimensional distributions.
It follows immediately that the scaling limit $\{A(E_t), t \ge 0\}$ in \eqref{SRDconv}
is self-similar with index $\beta/\alpha$.
When $0<\beta\leq 1/2$, the inner process $E_t$ in \eqref{SRDconv} is also the
local time at zero of a strictly stable L\'evy motion \cite{MNX}.  When
$1/2\leq \beta<1$, the inner process $E_t$ is also the supremum process of
a stable L\'evy motion with index $1/\beta$ and no negative jumps \cite{Bingham73}.

Let $\partial_t^\beta g(t)$ denote the Caputo fractional derivative, the inverse Laplace
transform of $s^\beta \tilde g(s)-s^{\beta-1}g(0)$ where $\tilde g(s)=
\int_0^\infty e^{-st}\,g(t)\,dt$ is the usual Laplace transform of $g$.  Let
$\partial^\alpha_{\pm x} f(x)$ denote the Liouville fractional derivative, the
inverse Fourier transform of $(\pm ik)^\alpha\hat f(k)$, where $\hat f(k)=
\int_{-\infty}^\infty e^{-ikx}f(x)\,ds$ is the usual Fourier transform.
The stable random variable $A(t)$ has a smooth density with Fourier transform
$e^{-t\psi(k)}$ where $\psi(k)=a[p(ik)^\alpha+q(-ik)^\alpha]$ with $0\leq p,q\leq 1$
and $p+q=1$ \cite{RVbook}.  Then the limit $A(E_t)$ in \eqref{SRDconv} has a
density $h(x,t)$ that solves the fractional diffusion
equation $\partial_t^\beta h =a p\partial_x^\alpha h + a q \partial_{-x}^\alpha h$,
see \cite{limitCTRW}.

Next we consider the CTRW scaling limit for heavy-tailed particle jumps with
long-range dependence. To simplify the presentation, we assume $a_n=n^{-1/\alpha}$
(domain of normal attraction) and power-law weights; namely $c_j\sim
c_0 j^{H-1-1/\alpha}$ as $j \to \infty$, for some $c_0>0$. The case $0 < H < 1/\alpha$
means the stationary sequence $\{Y_n\}$ has short-range dependence, while the
case $1/\alpha<H<1$ means $\{Y_n\}$ has long-range dependence. The scaling
limit of CTRW with short-range dependence has been partially covered by Theorem
\ref{SRDtheorem}. The rest of the cases are treated in Theorems \ref{LRDthm2} and
\ref{LRDthm3} below.

We will make use of the following definition. Given constants $\alpha \in (0, 2)$
and $H \in (0, 1)$, the $\alpha$-stable process $\{L_{\alpha,H}(t), t \in \R\}$
defined by
\begin{equation}\label{Def:LFSS}
L_{\alpha,H}(t)=\int_{\R}\left[(t-s)_+^{H-1/\alpha}-(-s)_+^{H-1/\alpha}\right]\,A(ds)
\end{equation}
is called a linear fractional stable motion (LFSM) with indices $\alpha$ and $H$.
In the above, $a_+ = \max\{0, a\}$ for all $ a \in \R$ and $\{A(t), t \in \R\}$ is
a two-sided strictly stable L\'evy process
of index $\alpha$ with $A(1) = A$ given at the beginning of Section 2. Namely,
$n^{-1/\alpha} P(n) \Rightarrow A$ as $n \to \infty$.
Because of this, $\{L_{\alpha,H}(t), t \in \R\}$ defined by (2.2) differs
from the LFSM in Theorem 4.7.2 in \cite{whitt} by a constant factor.
Note that, when $H =  1/\alpha$, $L_{\alpha,H}(t) = A(t)$ for
all $t \ge 0$. When  $H \ne 1/\alpha$, the stochastic integral in (\ref{Def:LFSS})
is well-defined because
\[
\int_{\R}\Big|(t-s)_+^{H-1/\alpha}-(-s)_+^{H-1/\alpha}\Big|^\alpha\,dr
< \infty.
\]
See \cite[Chapter 3]{ST94}.

By (\ref{Def:LFSS}), it can be verified that $\{L_{\alpha,H}(t), t \in \R\}$ is
$H$-self-similar with stationary increments \cite[Proposition 7.4.2]{ST94}. It
is an $\alpha$-stable analogue of fractional Brownian motion and its
probabilistic and statistical properties have been investigated by many authors.
In particular, it is known that
\begin{itemize}
\item[(i)]\ If $1/\alpha<H<1$ (this is possible only when $1 < \alpha < 2$),
then the sample
function of $\{L_{\alpha,H}(t), t \in \R\}$ is almost surely continuous.
\item[(ii)]\ If $ 0< H<1/\alpha$, then the the sample function of $\{L_{\alpha,H}(t),
t \in \R\}$ is almost surely unbounded on every interval of positive length.
\end{itemize}
We refer to \cite[Chapters 10 and 12]{ST94} for more information.

\begin{theorem}\label{LRDthm2}
We assume the setting of this section.  If $1<\alpha<2$, $1/\alpha<H<1$,
and $c_j \sim c_0 j^{H-1-1/\alpha}$ as $j \to \infty$ for some $c_0>0$,
then as $c \to \infty$
\begin{equation}\label{LRDconv2}
[\tilde b(c)]^{-H} S(N_{ct})\Rightarrow K_1\, L_{\alpha,H}(E_t)
\end{equation}
in the $J_1$ topology on $D([0,\infty),\R)$, where $K_1 =
c_0\alpha/(H \alpha -1)$.
\end{theorem}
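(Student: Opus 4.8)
The plan is to establish a functional limit theorem for the partial sum process $S(n)$ first, and then pass through the random time change $N_t$ using a continuous-mapping / composition argument. For the first step, under the assumptions $1<\alpha<2$, $1/\alpha<H<1$, $a_n = n^{-1/\alpha}$, and $c_j\sim c_0 j^{H-1-1/\alpha}$, the classical results on long-range dependent linear processes with heavy-tailed innovations (Astrauskas, Kasahara--Maejima, and Whitt's Theorem 4.7.2, as cited) give
\[
n^{-H} S([nu]) \Rightarrow K_1\, L_{\alpha,H}(u)
\]
in $D([0,\infty),\R)$ with the $J_1$ topology, where the constant $K_1 = c_0\alpha/(H\alpha-1)$ arises from $\int_0^\infty \big[(1+x)_+^{H-1-1/\alpha} \text{-type tail sums}\big]$ — more precisely from comparing $\sum_{j} c_j \mathbf 1[\text{window}]$ to the kernel $(t-s)_+^{H-1/\alpha}-(-s)_+^{H-1/\alpha}$ after differentiating; the normalization reflects our convention $n^{-1/\alpha}P(n)\Rightarrow A$ noted in the definition of $L_{\alpha,H}$. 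Here the $J_1$ topology (rather than merely $M_1$) is available precisely because $1<\alpha<2$ together with $H>1/\alpha$ forces the limit $L_{\alpha,H}$ to have \emph{continuous} sample paths (fact (i) in the excerpt), so no jump-matching obstruction arises — this is the key structural difference from Theorem \ref{SRDtheorem}.

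For the second step, I would invoke the joint convergence of $(S(\lfloor n\cdot\rfloor), N_{n\cdot})$ after suitable scaling. By the hypotheses on $J_n$, the counting process satisfies $[\tilde b(c)]^{-1} N_{ct} \Rightarrow E_t$ in $D([0,\infty),\R)$ with the $J_1$ topology, as in Meerschaert--Scheffler \cite{limitCTRW}; crucially $E_t$ is almost surely continuous and strictly increasing. Since $(Y_n)$ and $(J_n)$ are independent, the pair
\[
\Big([\tilde b(c)]^{-H} S(\lfloor \tilde b(c) u\rfloor),\ [\tilde b(c)]^{-1} N_{ct}\Big) \Rightarrow \big(K_1 L_{\alpha,H}(u),\ E_t\big)
\]
jointly in the product space. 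Then I would apply a composition-continuity theorem (Whitt \cite[Theorem 13.2.4]{whitt}, or the version in \cite{limitCTRW, CTRW}): if $x_n\to x$ in $(D,J_1)$ with $x$ continuous, and $y_n\to y$ in $(D,J_1)$ with $y$ continuous and nondecreasing, then $x_n\circ y_n \to x\circ y$ in $(D,J_1)$. Applying this with $x_n(u) = [\tilde b(c)]^{-H} S(\lfloor \tilde b(c) u\rfloor)$ and $y_n(t) = [\tilde b(c)]^{-1} N_{ct}$ yields $[\tilde b(c)]^{-H} S(N_{ct}) \Rightarrow K_1\, L_{\alpha,H}(E_t)$, which is \eqref{LRDconv2}; the scaling exponents match because $S$ is evaluated at $N_{ct}\approx \tilde b(c) E_t$ and $S$ is normalized by the $H$-th power.

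The main obstacle I anticipate is \emph{justifying the joint convergence and the composition step rigorously in the $J_1$ topology}, rather than just invoking it. Two points need care. First, the composition map $(x,y)\mapsto x\circ y$ is only continuous at pairs where $x$ is continuous (or where $x,y$ have no common discontinuities); here continuity of $L_{\alpha,H}$ under $1/\alpha<H<1$ saves us, so I would emphasize that the hypothesis $1<\alpha<2$ is used exactly to make $H>1/\alpha$ feasible and hence the kernel exponent $H-1/\alpha>0$, giving path continuity. Second, one must confirm that the first-step convergence genuinely holds in $D([0,\infty),\R)$ with $J_1$ (not only finite-dimensional or $M_1$) for this class of linear processes — this is where I would lean on the cited literature (Kasahara--Maejima, Whitt \cite[Chapter 4]{whitt}, \cite[Chapter 3]{ST94}) and possibly a tightness argument using the $L^\rho$-summability $\sum|c_j|^\rho<\infty$ together with a moment/maximal-inequality bound on $\max_{k\le n}|S(k)|$. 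The constant $K_1$ itself is a routine computation, identifying $\sum_{j=0}^{\lfloor nu\rfloor - m} c_{j} \approx c_0 \int \ldots$ via Karamata, and I would relegate it to a short calculation rather than dwell on it.
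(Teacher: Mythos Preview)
Your proposal is correct and follows essentially the same route as the paper: invoke Whitt's Theorem 4.7.2 (Astrauskas) for $n^{-H}S(nt)\Rightarrow K_1 L_{\alpha,H}(t)$ in $J_1$, Corollary 3.4 of \cite{limitCTRW} for $\tilde b(c)^{-1}N_{ct}\Rightarrow E_t$, pass to joint convergence via independence, and then compose using the continuity of $L_{\alpha,H}$ and of $E_t$. The only cosmetic difference is that the paper first applies \cite[Theorem 13.2.4]{whitt} to obtain $M_1$ convergence and then upgrades to $J_1$ via \cite[Theorem 13.3.1]{whitt}, whereas you state the composition-continuity lemma in a form that yields $J_1$ directly; both are fine, and your anticipated ``obstacles'' (tightness, continuity of the composition map) are already absorbed by the cited results.
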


The topology on $D([0,\infty),\R)$ in Theorem \ref{LRDthm2} is stronger than that
in Theorem \ref{SRDtheorem}, thanks to the fact that $L_{\alpha,H}(t)$ is a.s.
continuous whenever $1/\alpha<H<1$.

Observe that the case when $0 < H < 1/\alpha$ and the constants $c_j$ ($j \ge 0$)
are not all nonnegative is left uncovered by Theorems \ref{SRDtheorem}
and \ref{LRDthm2}. Because of Property (ii) of $\{L_{\alpha,H}(t), t \in \R\}$,
the limiting process does not belong to the function space $D([0,\infty),\R)$.
Nevertheless, we have the following theorem.

\begin{theorem}\label{LRDthm3}
We assume the setting of this section. If $0<\alpha <2$, $0 < H < 1/\alpha$, $c_j\sim
c_0 j^{H-1-1/\alpha}$ as $j \to \infty$ for some $c_0>0$, and $\sum_j c_j = 0$,
then
\begin{equation}\label{LRDconv3}
[\tilde b(c)]^{-H} S(N_{ct}) \stackrel{f.d.} {\longrightarrow} K_1\, L_{\alpha,H}(E_t)
\end{equation}
as $c \to \infty$, where  $\stackrel{f.d.} {\longrightarrow}$ means convergence of all
finite-dimensional distributions and $K_1 =
c_0\alpha/(H \alpha -1)$.
\end{theorem}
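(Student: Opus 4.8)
The plan is to combine three ingredients: a scaling limit for the pure partial-sum process $n^{-H}S([n\,\cdot\,])$ of the linear process $\{Y_n\}$, the scaling limit $N_{ct}/\tilde b(c)\Rightarrow E_t$ already available from \cite{limitCTRW}, and the independence of the innovations $(Z_n)$ from the waiting times $(J_n)$. Because $0<H<1/\alpha$ forces $L_{\alpha,H}$ to be a.s.\ unbounded on every interval (property (ii) above), I would \emph{not} attempt a continuous-mapping argument on a Skorokhod path space, as in Theorems~\ref{SRDtheorem} and \ref{LRDthm2}; instead the composition is to be carried out directly at the level of finite-dimensional distributions, by conditioning on the waiting-time sequence.

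First I would establish the outer limit. Since $0<H<1/\alpha$ the exponent $H-1-1/\alpha$ is $<-1$, so $\sum_j|c_j|<\infty$ and the hypothesis $\sum_j c_j=0$ is meaningful; writing $C_\ell=\sum_{j=0}^\ell c_j$ (with $C_\ell=0$ for $\ell<0$) it gives $C_\ell=-\sum_{j>\ell}c_j\sim K_1\,\ell^{H-1/\alpha}$ with $K_1=c_0\alpha/(H\alpha-1)$. Summation by parts yields $S([nu])=\sum_m Z_m\big(C_{[nu]-m}-C_{-m}\big)$, so with $a_n=n^{-1/\alpha}$ one expects
\[
\begin{aligned}
n^{-H}S([nu])&=K_1\,n^{-1/\alpha}\sum_m Z_m\Big[(u-\tfrac mn)_+^{H-1/\alpha}-(-\tfrac mn)_+^{H-1/\alpha}\Big]+o_P(1)\\
&\Rightarrow\ K_1\,L_{\alpha,H}(u)
\end{aligned}
\]
finite-dimensionally; this is essentially the classical theorem of Kasahara--Maejima and Astrauskas \cite{kasahara-maejima,Astrauskus}, the role of $\sum_j c_j=0$ being exactly to annihilate the $n^{1/\alpha}$-scale term $(\sum_j c_j)A$ so that the $n^{H}$-scale term (with $H<1/\alpha$) dominates. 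The $o_P(1)$ would be controlled by the crude bound $\Exp\big|\sum_m d_m Z_m\big|^\rho\le\Exp|Z_1|^\rho\sum_m|d_m|^\rho$ (valid for $\rho\le1$, $\rho<\alpha$, by subadditivity of $t\mapsto t^\rho$) applied to the difference of the two kernels. Finally, since $L_{\alpha,H}$ has stationary increments and is $H$-self-similar it is stochastically continuous, and a short $\varepsilon$-$\delta$ argument upgrades the display to: whenever $v_i^{(c)}\to v_i$ one has $n^{-H}\big(S([n\,v_i^{(c)}])\big)_{i\le k}\Rightarrow K_1\big(L_{\alpha,H}(v_i)\big)_{i\le k}$.

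Next I would invoke \cite{limitCTRW} for $\big(N_{ct}/\tilde b(c)\big)_{t\ge0}\Rightarrow(E_t)_{t\ge0}$, with $E$ a.s.\ continuous and strictly increasing, hence $\big(N_{ct_i}/\tilde b(c)\big)_{i\le k}\Rightarrow(E_{t_i})_{i\le k}$ for fixed $0<t_1<\dots<t_k$. Fixing such $t_i$ and a bounded continuous $F:\R^k\to\R$, independence of $(Z_n)$ and $(J_n)$ lets me condition on the waiting times to write $\Exp\big[F\big([\tilde b(c)]^{-H}(S(N_{ct_i}))_{i\le k}\big)\big]=\Exp\big[\psi_c\big(N_{ct_1}/\tilde b(c),\dots,N_{ct_k}/\tilde b(c)\big)\big]$, where $\psi_c(v_1,\dots,v_k)=\Exp\big[F\big([\tilde b(c)]^{-H}(S([\tilde b(c)v_i]))_{i\le k}\big)\big]$. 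The strengthened outer limit says precisely that $\psi_c\to\Phi$ continuously, where $\Phi(v)=\Exp\big[F\big(K_1(L_{\alpha,H}(v_i))_{i\le k}\big)\big]$ is bounded and (by stochastic continuity of $L_{\alpha,H}$) continuous. The extended continuous-mapping theorem (continuous convergence of $\psi_c$ together with weak convergence of the arguments) then gives $\Exp[\psi_c(N_{ct_i}/\tilde b(c))]\to\Exp[\Phi(E_{t_i})]=\Exp\big[F\big(K_1(L_{\alpha,H}(E_{t_i}))_{i\le k}\big)\big]$, the last expectation taken with $L_{\alpha,H}$ independent of $(E_{t_i})_{i\le k}$; since $F$ is arbitrary, this is \eqref{LRDconv3}.

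The hard part is this last composition step, which is also the reason the conclusion is only finite-dimensional: for $0<H<1/\alpha$ the outer process has no usable path regularity, the pair $\big([\tilde b(c)]^{-H}S([\tilde b(c)\,\cdot\,]),\,N_{c\,\cdot}/\tilde b(c)\big)$ fails to converge in any Skorokhod space, and no composition-continuity argument is available. The conditioning device replaces it, but it really uses (i) independence of the $Z$'s and $J$'s, (ii) the \emph{continuous} (uniform-on-compacts) convergence $\psi_c\to\Phi$ rather than mere pointwise convergence---which is where stochastic continuity of $L_{\alpha,H}$ and the control of the random evaluation points $N_{ct_i}/\tilde b(c)$ enter---and (iii) the identification of the mixture limit as the genuine independent subordination $L_{\alpha,H}(E_\cdot)$. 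A secondary nuisance is the calculus estimate showing that the rescaled difference between $C_{[nu]-m}-C_{-m}$ and $K_1 n^{H-1/\alpha}\big[(u-\tfrac mn)_+^{H-1/\alpha}-(-\tfrac mn)_+^{H-1/\alpha}\big]$ has $\ell^\rho$-norm tending to $0$, which is what makes the $o_P(1)$ above precise.
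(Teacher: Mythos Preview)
Your architecture is exactly the paper's: recognize that for $0<H<1/\alpha$ no Skorokhod-space composition is available, condition on the waiting times using independence, and reduce to showing the \emph{continuous} convergence $\mu_c(\x^{(c)})\Rightarrow\nu(\x)$ of the partial-sum finite-dimensional laws whenever $\x^{(c)}\to\x$. The paper packages the conditioning step as Proposition~4.1 of \cite{CTRW} (your ``extended continuous-mapping theorem'' is the same device) and then obtains the continuous convergence by invoking Corollary~3.3 of Kasahara--Maejima \cite{kasahara-maejima} for the actual kernels $g_c(x,r)=c^{-(H-1/\alpha)}\sum_{k=1-[cr]}^{[cx]-[cr]}\widetilde c_k$, verifying their hypotheses (A1)$'$, (A2)$'$, (A3)$'$; this verification is the bulk of the proof.

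One caution: your sentence ``since $L_{\alpha,H}$ is stochastically continuous, a short $\varepsilon$--$\delta$ argument upgrades the display'' is not right as stated. Stochastic continuity of the \emph{limit} does not by itself promote pointwise f.d.\ convergence to continuous convergence; one needs uniform control of the \emph{prelimit} kernels. Your later $\ell^\rho$-remainder plan is a legitimate route to that control, but it must cope with the singularity of the ideal kernel $(v-r)_+^{H-1/\alpha}$ at $r=v$ (here $H-1/\alpha<0$), where the actual partial sums $C_\ell$ stay bounded while the ideal kernel blows up; this forces a careful choice of exponent and a separate treatment of the few terms near $[cv]$. The paper sidesteps this by working directly with $g_c$ and choosing $\beta>\alpha$ with $\beta(H-1/\alpha)>-1$ in (A2)$'$. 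Either way, this step is the substantive work, not a short $\varepsilon$--$\delta$.
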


It is interesting to note that the constants in  Theorems \ref{LRDthm2}
and \ref{LRDthm3} are determined by $c_0$, $\alpha$ and $H$ in the same way.
But $K_1$ is positive is when $1/\alpha<H<1$, and is
negative when $0 < H < 1/\alpha$.

It follows from the self-similarity of $\{L_{\alpha,H}(t), t \in \R\}$ and $\{E(t),
t \ge 0\}$ that the scaling limits in \eqref{LRDconv2} and \eqref{LRDconv3} are
self-similar with index $H\beta$. When $1/\alpha<H<1$, it can be seen that
$\{L_{\alpha,H}(E_t), t \ge 0\}$ has continuous sample functions almost
surely. However, if $0 < H < 1/\alpha$, then $\{L_{\alpha,H}(E_t), t \ge 0\}$ is
almost surely unbounded on every interval of positive length. It would be interesting
to further study the properties of the process $\{L_{\alpha,H}(E_t), t \ge 0\}$.

We mention that both Theorems \ref{LRDthm2} and \ref{LRDthm3} can be extended to
$(Z_n)$ in the strict domain of attraction of
$A$ and $(c_j)$ regularly varying at $\infty$ with index $H-1-1/\alpha$,
using a slightly different normalization in \eqref{LRDconv2} depending on
$a_n$ and the probability tail of $Z_n$, compare \cite{Astrauskus}.

Finally we consider the case $\alpha = 2$. If $\{A(t), t \in \R\}$ in (\ref{Def:LFSS})
is replaced by ordinary two-sided Brownian motion, then  (\ref{Def:LFSS}) defines
a fractional Brownian motion $W_H = \{W_H(t), t \in \R\}$ on $\R$ of index $H$,
which is a Gaussian process with mean zero and covariance function
$$\Exp[W_H(t)W_H(s)]= \frac 1 2 \big[|t|^{2H}+|s|^{2H}-|t-s|^{2H}\big]. $$

Theorem \ref{LRDthm} gives the CTRW scaling limit for light-tailed particle jumps
with long-range dependence.

\begin{theorem}\label{LRDthm}
We assume the setting of this section. If $\alpha=2$, $\Exp[Z_n]=0$, $\Exp[Z_n^2]<\infty$,
$\sum_j c_j^2<\infty$, $\Var[S(n)]=\sigma_n^2$ varies regularly at $\infty$ with
index $2H$ for some $0<H<1$, and $\Exp[S(n)^{2\rho}]\leq K_2\,
\big[\Exp (S(n)^2)\big]^\rho$ for some constants $K_2>0$ and $\rho>1/H$, then
as $c \to \infty$
\begin{equation}\label{LRDconv}
\sigma^{-1}_{[\tilde b(c)]} S(N_{ct})\Rightarrow W_H(E_t)
\end{equation}
in the $J_1$ topology on $D([0,\infty),\R)$.
\end{theorem}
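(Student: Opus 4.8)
The plan is to obtain \eqref{LRDconv} by combining an \emph{outer} functional central limit theorem for the partial-sum process $S$ with an \emph{inner} limit theorem for the renewal counting process $N_t$, and then composing the two limits, just as in the independent-jump case \cite{limitCTRW,CTRW}; the only essentially new ingredient is that, because of long-range dependence, the outer limit is a fractional Brownian motion $W_H$ rather than a Brownian motion.

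\emph{Step 1 (outer limit).} I would first establish the functional central limit theorem $\sigma_n^{-1}S([nt])\Rightarrow W_H(t)$ in $D([0,\infty),\R)$ --- equivalently, locally uniformly in $t$, since $W_H$ has almost surely continuous paths. For the finite-dimensional distributions, stationarity of $\{Y_n\}$ gives $S(m)-S(k)\stackrel{d}{=}S(m-k)$ for $m\ge k$, whence $\operatorname{Cov}(S(m),S(n))=\tfrac12\big(\sigma_m^2+\sigma_n^2-\sigma_{|n-m|}^2\big)$; dividing by $\sigma_n^2$ and using regular variation of $\sigma_n^2$ with index $2H>0$ shows that the covariances of $\sigma_n^{-1}S([nt])$ converge to those of $W_H$, while asymptotic Gaussianity follows from the Lindeberg central limit theorem applied to the triangular array of linear combinations of the IID sequence $(Z_n)$ (alternatively, one may invoke the invariance principle of Davydov \cite{davydov} or Whitt \cite{whitt}). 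For tightness, the moment hypothesis and stationarity give $\Exp|S([nt])-S([ns])|^{2\rho}=\Exp|S(|[nt]-[ns]|)|^{2\rho}\le K_2\,\sigma_{|[nt]-[ns]|}^{2\rho}$; dividing by $\sigma_n^{2\rho}$ and applying Potter's bounds to the regularly varying function $m\mapsto\sigma_m^{2\rho}$ (of index $2H\rho$) yields $\Exp\big|\sigma_n^{-1}(S([nt])-S([ns]))\big|^{2\rho}\le C\,|t-s|^{(2H-\varepsilon)\rho}$ for $|t-s|\ge 1/n$ and any small $\varepsilon>0$. Since $\rho>1/H$ we may fix $\varepsilon$ so that $(2H-\varepsilon)\rho>2>1$, and the standard moment criterion for tightness in $D([0,\infty),\R)$ then applies.

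\emph{Step 2 (inner limit and composition).} The domain-of-attraction assumption on $(J_n)$ yields $b_nT_{[nt]}\Rightarrow D(t)$, and by \cite{limitCTRW} the renewal process satisfies $\tilde b(c)^{-1}N_{ct}\Rightarrow E_t$ in $D([0,\infty),\R)$, the convergence holding in the $J_1$ topology because $E_t$ is continuous in $t$. Put $n(c)=[\tilde b(c)]$; since $n(c)/\tilde b(c)\to1$ we also have $n(c)^{-1}N_{ct}\Rightarrow E_t$. Because the jumps $(Z_n)$ are independent of the waiting times $(J_n)$, Step 1 and this convergence combine into the joint convergence of $\big(\sigma_{n(c)}^{-1}S(n(c)\,t),\ n(c)^{-1}N_{ct}\big)$ to $\big(W_H(t),\ E_t\big)$ in $D([0,\infty),\R)^2$, the two limit processes being independent. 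Because $N_{ct}$ is integer-valued we have the exact identity $S(N_{ct})=S\big(\big[\,n(c)\cdot(N_{ct}/n(c))\,\big]\big)$, so $\sigma_{[\tilde b(c)]}^{-1}S(N_{ct})$ is the image of the above pair under the composition map $(x,y)\mapsto x\circ y$. This map is continuous at every pair $(x,y)$ with $x$ continuous and $y$ nondecreasing (see \cite{whitt}); here $W_H$ has almost surely continuous paths and $E_t$ is almost surely nondecreasing (indeed continuous and strictly increasing) in $t$. By the continuous-mapping theorem, $\sigma_{[\tilde b(c)]}^{-1}S(N_{ct})\Rightarrow W_H(E_t)$, and since the limit has continuous paths this convergence takes place in the $J_1$ topology (in fact locally uniformly), which is \eqref{LRDconv}.

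\emph{Main obstacle.} The delicate part is the tightness estimate in Step 1: one must convert the single bound $\Exp|S(n)|^{2\rho}\le K_2\sigma_n^{2\rho}$, together with the regular variation of $\sigma_n^2$, into a uniform modulus-of-continuity estimate for the rescaled process, which calls for care with Potter-type bounds near the diagonal $t\approx s$ and at the endpoints of the time interval. Once the outer invariance principle is available, the renewal limit is quoted from \cite{limitCTRW} and the composition argument is routine.
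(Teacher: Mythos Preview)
Your proposal is correct and follows essentially the same route as the paper: outer FCLT $\sigma_n^{-1}S(nt)\Rightarrow W_H(t)$, inner limit $\tilde b(c)^{-1}N_{ct}\Rightarrow E_t$ from \cite{limitCTRW}, joint convergence by independence, and then composition via the continuous-mapping results in \cite{whitt}. The only difference is that the paper simply quotes the outer invariance principle from Whitt's Theorem~4.6.1 (originally Davydov \cite{davydov}) rather than sketching the finite-dimensional and tightness arguments as you do in Step~1; your sketch is fine but unnecessary once that citation is in place.
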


Note that it is not difficult to provide examples of sequences of IID random
variables $\{Z_n\}$ and real numbers $\{c_j\}$ that satisfy the conditions
of Theorem \ref{LRDthm}, see \cite{davydov, Giriatis03}. It follows from the results of
Taqqu \cite{taqqu} that the conclusion of Theorem \ref{LRDthm}
still holds if the linear process $\{Y_n\}$ is replaced by the stationary sequence
$\{g(\xi_n)\}$, where $\{\xi_n\}$ is a stationary Gaussian sequence with mean 0,
variance 1 and long-range dependence, and $g \in L^2(e^{-x^2/2}dx)$
is a function with Hermite rank $1$.

Theorem \ref{LRDthm} contains the case $H=1/2$ where $W_H(t)=A(t)$ is a
standard Brownian motion. This includes the situation of mean zero finite
variance particle jumps, and heavy tailed waiting times between jumps.
In this case, the CTRW scaling limit $A(E_t)$ has a density $h(x,t)$ that
solves the time-fractional diffusion equation $\partial_t^\beta h=a\,\partial_x^2 h$,
see \cite{limitCTRW}.  Since $\{W_H(ct), t \ge 0\} \stackrel{d}{=}
\{c^{H} W_H(t), t \ge 0\}$, the scaling limit in
\eqref{LRDconv} is self-similar with index $H\beta$. Some results on large
deviation and sample path regularity have recently been obtained for
$\{W_H(E_t), t \ge 0\}$ in \cite{MNX}.

In the case of finite mean waiting times, the CTRW scaling limit is essentially
the same as for the underlying random walk.  If $\mu=\Exp J_n<\infty$, then
$\mu N_t/t\to 1$ almost surely as $t\to\infty$, and a simple argument along
the lines of the proof of Theorem \ref{SRDtheorem} shows that $w^{-1} a_{[c]}
S(N_{ct})\Rightarrow A(t/\mu)$ in the $M_1$ topology on $D([0,\infty),\R)$.
Theorems \ref{LRDthm2}, \ref{LRDthm3} and \ref{LRDthm} can be extended similarly.

An easy argument with Fourier transforms shows that the density $h(x,t)$ of
$L_{\alpha,H}(t)$ solves $\partial_t h=\alpha
Ht^{\alpha H-1}[ap\partial_x^\alpha h + aq \partial_{-x}^\alpha h]$.
A similar argument shows that the density of $W_H(t)$ solves
$\partial_t h=2Ht^{2H-1}a\partial_x^2 h$. An interesting open
question is to establish the governing equation for the CTRW 
scaling limit in \eqref{LRDconv2} and \eqref{LRDconv}. This is 
not as simple as replacing the first time derivative by a 
fractional derivative in the governing equation for
the outer process, since the $t$ variable also appears on 
the right-hand side, so that Theorem 3.1 of \cite{fracCauchy} 
does not apply.

\bigskip
\section{Proofs}

The proofs in this section are based on invariance principles for 
stationary sequences with short or long-range dependence
(see, for example, \cite{whitt}) and the CTRW 
limit theory developed in \cite{limitCTRW}. Due to the non-Markovian 
nature of the CTRW scaling limits in this paper, standard subordination 
methods can not be applied directly. Instead we apply continuous 
mapping-type arguments to prove Theorems \ref{SRDtheorem}, 
\ref{LRDthm2} and \ref{LRDthm}. The proof of Theorem \ref{LRDthm3} 
is quite different and relies on a criterion for the convergence 
of all finite-dimensional distributions of composite processes established 
by Becker-Kern, Meerschaert and Scheffler \cite{CTRW}.

Recall that $J_n>0$ are IID waiting times, $T_n=J_1+\cdots+J_n$ the time of
the $n$th particle jump, and $N_t=\max\{n:T_n\leq t\}$ the number of jumps by
time $t>0$.  Since $J_n$ belongs to the domain of attraction of some stable
law $D$ with index $0<\beta< 1$ and $D > 0$ almost surely, with
$b_nT_n\Rightarrow D$ for some norming
constants $b_n>0$, the sequence $b_n$ varies regularly with index $-1/\beta$,
see \cite{Feller}.  Then the asymptotic inverse $\tilde b(t)$ of $1/b$ varies
regularly with index $\beta$, see Seneta \cite{seneta}.  Recall that the stable
L\'evy motion $\{D(x), x \ge 0\}$ with $D(1)=D$ is a stable subordinator of
index $\beta$ and thus is strictly increasing \cite[p.75]{Bertoin96}.
Its inverse or hitting time process $E_t=\inf\{x>0:D(x)>t\}$ is strictly increasing
with continuous sample paths, has moments of all orders, and its increments are
neither stationary nor independent \cite{limitCTRW}. Bingham \cite{bingham}
shows that $E_t$ has a Mittag-Leffler distribution, and gives a differential
equation that governs its finite dimensional distributions.

\begin{proof}[Proof of Theorem \ref{SRDtheorem}]
Corollary 3.4 in \cite{limitCTRW} shows that $\tilde b(c)^{-1}N_{ct} \Rightarrow E_t $
as $c\to\infty$ in the $J_1$ topology on $D([0,\infty),[0,\infty))$.  Note that
$\tilde b(c)\to\infty$ as $c\to\infty$ since this function is regularly varying at
$\infty$ with index $\beta>0$.  Theorem 4.7.1 in Whitt \cite{whitt} shows that $a_n S(nt)
\Rightarrow w A(t)$ in the $M_1$ topology on $D([0,\infty),\rr)$.  Since the $J_1$
topology is stronger, and since the waiting times $(J_n)$ are independent of
$(Y_n)$, we have
$$
\big(a_{[\tilde b(c)]} S(\tilde b(c)t),\tilde b(c)^{-1}N_{ct}\big)\, \Rightarrow\,
(A(t),E_t)
$$
in the $M_1$ topology of the product space $D([0,\infty),\rr\times [0,\infty))$.
Of course, this last statement also follows from Theorem 3.2 in \cite{billingsley}.

Since the process $\{E_t, t \ge 0\}$ is strictly increasing
and continuous,  Theorem 13.2.4 in \cite{whitt} yields
\[
a_{[\tilde b(c)]}S\big(\tilde b(c)\cdot\tilde b(c)^{-1}N_{ct}\big)\, \Rightarrow
\, A(E(t))
\]
in the $M_1$ topology on $D([0,\infty),\rr)$, which completes the proof.
\end{proof}

\begin{proof}[Proof of Theorem \ref{LRDthm2}]
Recall that $\tilde b(c)^{-1}N_{ct} \Rightarrow E_t $ in the $J_1$ topology
on $D([0,\infty),$ $[0,\infty))$ \cite[Corollary 3.4]{limitCTRW}.
Theorem 4.7.2 in Whitt \cite{whitt}, originally due to
Astrauskas \cite{Astrauskus}, shows that $n^{-H} S(nt)\Rightarrow K_1\,
L_{\alpha,H}(t)$ in the $J_1$ topology on
$D([0,\infty),\rr)$, where $K_1 =c_0\alpha/(H \alpha -1)$.

Since $\{N_t, t \ge 0\}$ is independent of $\{S(n), n
\ge 1\}$, we have
$$([\tilde b(c)]^{-H} S(\tilde b(c)t),\tilde
b(c)^{-1}N_{ct}) \Rightarrow (K_1\, L_{\alpha,H}(t),E_t)$$
in the product space.
Combining this with \cite[Theorem 13.2.4]{whitt} yields \eqref{LRDconv}
in the $M_1$ topology. Since both processes $\{L_{\alpha,H}(t), t \ge 0\}$
and $\{E_t, t \ge 0\}$  are continuous, and the latter is strictly increasing,
one can apply Theorem 13.3.1 in \cite{whitt} to strengthen to the conclusion
to convergence in the $J_1$ topology. This proves Theorem \ref{LRDthm2}.
\end{proof}

\begin{proof}[Proof of Theorem \ref{LRDthm3}]
It is sufficient to show that for all integers $m \ge 1$, $0 < t_1 < \cdots < t_m$,
we have
\begin{equation}\label{Eq:Con-fidi}
\tilde b(c)^{-H}\big( S(N_{ct_1}), \ldots, S(N_{ct_m})\big) \Rightarrow
\, K_1\big( L_{\alpha,H}(E_{t_1}), \ldots, L_{\alpha,H}(E_{t_m})\big)
\end{equation}
as $c \to \infty$. For this purpose, we will make use of Proposition 4.1 in
\cite{CTRW}, which provides a useful criterion for the convergence of all
finite-dimensional distributions of composite processes,
and Corollary 3.3 in \cite{kasahara-maejima} which is concerned with convergence
of finite dimensional distributions of weighted partial sums of IID
random variables.

We will adopt some notation from \cite{CTRW}. For ${\bf t} = (t_1,
\ldots, t_m)$ and $c > 0$, let $\rho_c :=  \rho_c^{{\bf t}}$ be the distribution
of $\tilde b(c)^{-1} \,(N_{ct_1}, \ldots, N_{ct_m})$, and let
$\rho := \rho^{{\bf t}}$ be the distribution of $(E_{t_1}, \ldots, E_{t_m})$.
Since $\tilde b(c)^{-1} N_{ct} \Rightarrow E_t $ in the $J_1$ topology
on $D([0,\infty),$ $[0,\infty))$ \cite[Corollary 3.4]{limitCTRW}, we have
$\rho_c \Rightarrow \rho$ as $c \to \infty$.

It follows from the definition of $\{Y_n\}$ that, for every $x \ge 0$,
$S(nx)$ can be rewritten as
\begin{equation}\label{Eq:Sn}
S(nx) = \sum_{j=- \infty}^{\infty} \bigg(\sum_{k=1-j}^{[nx]-j}
\widetilde{c}_k\bigg) Z_j,
\end{equation}
where $\widetilde{c}_k= c_k$ if $k \ge 0$ and $\widetilde{c}_k = 0$
if $k < 0$. Under the assumptions of Theorem  \ref{LRDthm3}, we have
$\sum_{k=-\infty}^{\infty} |\widetilde{c}_k|
< \infty$, $\sum_{k=-\infty}^{\infty} \widetilde{c}_k = 0$ and
\[
\sum_{k=n}^\infty \widetilde{c}_k \sim c_0 \sum_{k=n}^\infty
k^{H- 1/\alpha - 1} \sim - \frac{c_0 \alpha} {H \alpha-1}\,
n^{H - 1/\alpha}
\]
as $n \to \infty$. Thus, the conditions of Theorem 5.2 in
\cite{kasahara-maejima} are satisfied with $\psi(n) = n^{H- 1/\alpha}$,
$a = -K_1$ (recall that $K_1 =c_0 \alpha/(H \alpha-1)$),
$b=0$ and $A=0$. It follows that
\begin{equation}\label{Eq:KM88}
n^{-H}\, S(nt) \stackrel{f.d.}{\longrightarrow}\,
K_1\, L_{\alpha,H}(t)
\quad \hbox{ as }\ n \to \infty.
\end{equation}


For any $\x = (x_1, \ldots, x_m) \in \R^m_+$, let $\mu_c(\x)$ be the
distribution of $\tilde b(c)^{-H}\big( S(\tilde b(c) x_1),$ $\ldots,
S(\tilde b(c) x_m)\big)$ and let $\nu(\x)$ be
the distribution of $K_1\big( L_{\alpha,H}(x_1), \ldots, L_{\alpha,H}(x_m)\big)$.
Then for every $c> 0$,  the mapping $\x \mapsto \mu_c(\x)$ is weakly measurable.
Since the linear fractional stable motion $\{L_{\alpha,H}(t), t \ge 0\}$
is stochastically continuous, the mapping $\x \mapsto \nu(\x)$ is weakly continuous.
Moreover, it follows from (\ref{Eq:KM88}) that, for every $\x \in \R^m_+$,
$\mu_c(\x) \Rightarrow \nu(\x)$ as $c \to \infty$.

As in \cite{CTRW} we apply a conditioning argument and the independence
between the sequences $\{Y_n\}$ and $\{J_n\}$ to derive that the
distribution of  $\tilde b(c)^{-H}
\big(S(N_{ct_1}), \ldots, S(N_{ct_m})\big)$ can be written as $\int_{\R^m_+}
\mu_c(\x)\, d\rho_c(\x)$, which is a probability measure on $\R^m$. Similarly,
the distribution of the random vector $K_1\big( L_{\alpha,H}(E_{t_1}),
\ldots, L_{\alpha,H}(E_{t_m})\big)$
can be written as $\int_{\R^m_+}\nu(\x)\, d\rho(\x)$.

Therefore, (\ref{Eq:Con-fidi}) follows from Proposition 4.1 in \cite{CTRW}
once we verify that, for every $\x \in \R^m_+$, $\mu_c(\x^{(c)}) \Rightarrow \nu(\x)$
for every sequence $\{\x^{(c)}\} \subset  \R^m_+$ that satisfies $\x^{(c)} \to \x$
as $c \to \infty$.

The last statement is equivalent to
\begin{equation}\label{Eq:KM1}
c^{-H}\,\big(S(cx_1^{(c)}), \ldots, S(cx_m^{(c)})\big)\  {\Rightarrow}\
K_1\, \big(L_{\alpha,H}(x_1), \ldots, L_{\alpha,H}(x_m)\big)
\end{equation}
whenever $\x^{(c)} \to \x$ as $c \to \infty$. This is stronger than
(\ref{Eq:KM88}), where the fixed time-instants $0< x_1
< x_2< \cdots < x_m$ are replaced now by $x_1^{(c)}, \ldots, x^{(c)}_m$.
Our proof of (\ref{Eq:KM1}) is a modification of the proof of
Theorem 5.2 in Kasahara and Maejima \cite{kasahara-maejima}.

To this end, we define the step function $ r\mapsto A_c(r)$ on $\R$ by
\begin{equation}\label{Eq:Ac}
 A_c(r) = \left\{ \begin{array}{ll}
 c^{-1/\alpha}\sum_{j=1}^{[cr]} Z_j \quad &\hbox{ if }\, r > 0,\\
 c^{-1/\alpha} \sum_{j=[cr]}^{0} Z_j &\hbox{ if }\, r \le 0.
\end{array}
\right.
\end{equation}
Then for any function $g$ on $\R$, as in \cite[p.88]{kasahara-maejima},
we define
\begin{equation}\label{Eq:Ac2}
\int_{- \infty}^\infty g(r)\, dA_c(r) = \frac 1 {c^{1/\alpha}}\,
 \sum_{j=-\infty}^\infty g\big(\frac {j} {c}\big)\, Z_j.
\end{equation}

By using (\ref{Eq:Sn}), (\ref{Eq:Ac}) and (\ref{Eq:Ac2}) we can rewrite
$c^{-H}S(cx)$ ($x > 0$ and $c > 0$) as
\begin{equation}\label{Eq:Sn2}
\begin{split}
c^{-H}S(cx) &=  \frac 1 {c^{1/\alpha}}\, \sum_{j=-\infty}^\infty
\frac1 {c^{H - 1/\alpha}} \bigg(\sum_{k=1-j}^{[cx]-j}
\widetilde{c}_k\bigg) Z_j \\
&= \frac 1 {c^{1/\alpha}} \int_{\R} g_c(x, r) dA_c(r),
\end{split}
\end{equation}
where the integrand $g_c(x, r)$ is given by
\begin{equation}\label{Eq:g}
\begin{split}
g_c(x, r) &= \frac1 {c^{H - 1/\alpha}} \,\sum_{k=1-[cr]}^{[cx]-[cr]}
\widetilde{c}_k\\
&= \frac1 {c^{H - 1/\alpha}} \bigg(\sum_{k= -[cr]+1}^\infty \widetilde{c}_k -
\sum_{k= [cx]-[cr]+1}^\infty \widetilde{c}_k\bigg).
\end{split}
\end{equation}
In the above, we have used  the fact that
$\sum_{j=-\infty}^\infty \widetilde{c}_j = 0$ to derive the second equality.

It follows from (\ref{Eq:Sn2}) that (\ref{Eq:KM1}) can be rewritten as
\begin{equation}\label{Eq:KM3}
\bigg\{\int_{\R} g_c(x_i^{(c)}, r)\, dA_c(r)\bigg\}_{i=1}^m \ \Rightarrow\
\bigg\{K_1 \int_{\R} g(x_i, r)\, dA(r)\bigg\}_{i=1}^m,
\end{equation}
where $g(x, r)= (x-r)_+^{H - 1/\alpha} - (-r)_+^{H - 1/\alpha}$
is the function in (\ref{Def:LFSS}).

Now let us fix $\x = (x_1, \ldots, x_m) \in \R_+^m$ and an
arbitrary sequence $\{\x^{(c)}\} \subset  \R^m_+$ that satisfies
$\x^{(c)} \to \x$ as $c \to \infty$.  By Corollary 3.3 in
\cite{kasahara-maejima} (with $f_n^i(\cdot)$
being taken as $g_c(x_i^{(c)}, \cdot)$), the convergence in (\ref{Eq:KM3})
will follow once we verify that for every $1 \le i \le m$ the following
conditions are satisfied:
\begin{itemize}
\item[(A1)$'$]\, for $dr$-almost every $r \in \R$,
\begin{equation}\label{Eq:g2}
g_c(x_i^{(c)}, r_c) \longrightarrow \ K_1\, g(x_i, r)
\end{equation}
whenever $r_c \to r$ as $c \to \infty$.

\item[(A2)$'$]\, for every $T > 0$, there exists a constant
$\beta > \alpha$ such that
\begin{equation}\label{Eq:g3}
\sup_{c \ge 1} \int_{|r|\le T} \big|g_c(x_i^{(c)}, r)\big|^\beta
\,d\rho_c(r) < \infty,
\end{equation}
where $\rho_c(r) = [cr]/c$, and
\item[(A3)$'$]\, there exists an $\ep > 0$ such that
\begin{equation}\label{Eq:g4}
\lim_{T \to \infty}\limsup_{c \to \infty} \int_{|r| >T}
\Big\{\big|g_c(x_i^{(c)}, r)\big|^{\alpha - \ep}
+ \big|g_c(x_i^{(c)}, r)\big|^{\alpha + \ep} \Big\}
  \,d\rho_c(r) = 0.
\end{equation}
\end{itemize}

For simplicity of notation, we will from now on omit the subscript $i$.
To verify Condition (A1)$'$, note that by the property of $\{c_k\}$,
we have
\begin{equation}\label{Eq:g5}
\lim_{c \to \infty} \frac{1} {c^{H -1/\alpha}}\sum_{k = [cr]+1}^\infty
\widetilde{c}_k = \left\{\begin{array}{ll}
- K_1 r^{H - 1/\alpha} \ \ &\hbox{ if } r > 0,\\
0 &\hbox{ if } r \le 0,
\end{array}
\right.
\end{equation}
and the convergence is uniform in $r$ on every compact set in
$\R\backslash\{0\}$. For any $x \in \R_+$ and $r\in \R$, we may distinguish
three cases $r < 0$, $0 \le r \le x$ and  $r > x$. By applying (\ref{Eq:g5})
to (\ref{Eq:g})
we derive that, as $c \to \infty$,
$g_c(x, r) \to g(x, r)$ uniformly in $(x, r)$ on every compact set in
$\{(x, r): x \in \R_+, r \in \R \backslash\{0, x\}\}$.
This implies that $g_c(x^{(c)}, r_c) \to g(x, r)$ whenever $r \ne x$ and
$r_c \to r$ as $c \to \infty$. Hence (A1)$'$ is satisfied.

To verify Condition (A2)$'$, we take a constant $\beta > \alpha$ such that
$\beta (H - 1/\alpha) > -1$ and consider the integral
\begin{equation}\label{Eq:g6}
\int_{|r|\le T} \bigg| \frac{1} {c^{H -1/\alpha}}\sum_{k =[cx]-[cr]+1}^\infty
\widetilde{c}_k \bigg|^\beta \,d \rho_c(r) = \sum_{|j| \le cT}\frac{1}
{c^{\beta(H -1/\alpha)+1}}\,
\bigg|\sum_{k =[cx]-j+1}^\infty
\widetilde{c}_k \bigg|^\beta
\end{equation}
Let $N > 1$ be a constant such that $|c_k| \le 2c_0 k^{H-1/\alpha -1}$ for all
$k \ge N$. We split the summation in the right-hand side of (\ref{Eq:g6})
according to whether $[cx]-j \le N$ or $[cx]-j > N$. Thanks to the fact that
$\sum_{k} \widetilde{c}_k= 0$ we have
\begin{equation}\label{Eq:g7}
\sum_{|j| \le cT,[cx]-j \le N }\frac{1}
{c^{\beta(H -1/\alpha)+1}}\,
\bigg|\sum_{k =[cx]-j+1}^\infty
\widetilde{c}_k \bigg|^\beta \le \frac{K_3}
{c^{\beta(H -1/\alpha)+1}}
\end{equation}
for some constant $K_3 > 0$. In the above we have also used the fact that there
are at most $N+1$ non-zero terms in the summation in $j$.

On the other hand, we have
\begin{equation}\label{Eq:g8}
\begin{split}
&\sum_{|j| \le cT,[cx]-j > N }\frac{1}
{c^{\beta(H -1/\alpha)+1}}\,
\bigg|\sum_{k =[cx]-j+1}^\infty
\widetilde{c}_k \bigg|^\beta \\
&\qquad \le   K_4
\sum_{|j| \le cT,[cx]-j > N } \frac{([x]-[j/c])^{\beta(H - 1/\alpha)}} c \,\\
&\qquad \le K_5 \int_{|r| \le T} |x-r|^{\beta(H - 1/\alpha)}\, dr
\end{split}
\end{equation}
for some constants $K_4,\, K_5 > 0$ and the last integral is convergent because
$\beta(H - 1/\alpha)>-1$.
Combining (\ref{Eq:g7}) and (\ref{Eq:g8}) yields
\begin{equation}\label{Eq:g9}
\begin{split}
&\int_{|r|\le T} \bigg| \frac{1} {c^{H -1/\alpha}}\sum_{k =[cx]-[cr]+1}^\infty
\widetilde{c}_k \bigg|^\beta \,d \rho_c(r) \\
&\le \frac{K_3}
{c^{\beta(H -1/\alpha)+1}} + K_5 \int_{|r| \le T} |x-r|^{\beta(H - 1/\alpha)}\, dr.
\end{split}
\end{equation}
Since the first sum in (\ref{Eq:g}) corresponds to $x = 0$, we see that
(A2)$'$ follows from (\ref{Eq:g9}).

The verification of (A3)$'$ is similar to the above and hence is omitted.
This finishes the proof of Theorem \ref{LRDthm3}.
\end{proof}

Finally we prove Theorem \ref{LRDthm}.

\begin{proof}[Proof of Theorem \ref{LRDthm}]
Recall that $\tilde b(c)^{-1}N_{ct} \Rightarrow E_t $ in the $J_1$ topology
\cite[Corollary 3.4]{limitCTRW}.  Theorem 4.6.1 in Whitt \cite{whitt} shows
that, as $n \to \infty$, $\sigma^{-1}_n S(nt) \Rightarrow W_H(t)$ in the $J_1$
topology on $D([0,\infty),\rr)$. This result is originally due
to Davydov \cite{davydov}, see also Giriatis et al. \cite[p.\ 276]{Giriatis03}.
Since the sequence $(J_n)$ is independent of $(Y_n)$, we have
$\left(\sigma^{-1}_{[\tilde b(c)]} S(\tilde b(c)t),\tilde b(c)^{-1}N_{ct}\right)
\Rightarrow (W_H(t),E_t)$ in the product
space, and then continuous mapping along with Theorem 13.3.1 in \cite{whitt}
yields \eqref{LRDconv} in the $J_1$ topology.
\end{proof}

\bigskip

\section{Discussion}

Self-similar processes arise naturally in limit theorems of random walks and
other stochastic processes, and they have been applied to model various phenomena
in a wide range of scientific areas including telecommunications, turbulence,
image processing and finance \cite{EmbrechtsMaejima}. The most prominent example
is fractional Brownian motion (FBM).  However, many real data sets are non-Gaussian,
which motivates the development of alternative models.
Many authors have constructed and investigated various classes of non-Gaussian
self-similar processes. Samorodnitsky and Taqqu \cite{ST94} provide a systematic
account on self-similar stable processes with stationary increments. Burdzy
\cite{burdzy1, burdzy2} introduced \emph{iterated Brownian motion} (IBM) which
replaces the time parameter of a two-sided Brownian motion by an independent
one-dimensional Brownian motion $B= \{B_t, t \ge 0\}$.
In this paper we have shown that the limit processes of CTRWs with correlated
jumps form a wide class of self-similar processes which are different from the
existing ones.

When $0<\beta\leq 1/2$, the inner process $E_t$ in \eqref{SRDconv} or
(\ref{LRDconv}) is also the local time at zero $L_t$ of a stable L\'evy
process, and the iterated process
$\{W_H(L_t), t \ge 0\}$ is called a local time fractional Brownian motion (LTFBM)
in \cite{MNX}, a self-similar process with index $\beta H$ and continuous sample
paths. Large deviation and modulus of continuity results for LTFBM are developed
in a companion paper \cite{MNX}. Strassen-type law of the iterated logarithm
has been proved by Cs\'{a}ki,  F\"{o}ldes and R\'{e}v\'{e}sz
\cite{CCFR2} for local time Brownian motion (LTBM, the case $H=1/2$).
It is interesting to note that our Theorem \ref{LRDthm} shows that
``randomly-stopped stationary sequence'' $\{(Y_n: n \le N_t), t \ge 0\}$
belongs to the domain of attraction of $\{W_H(L_t), t \ge 0\}$ for all
$H\in (0, 1)$. This theorem provides a physical interpretation of
the process $\{W_H(L_t), t \ge 0\}$.

One interesting property of LTBM is that its increments are
uncorrelated (this follows by simple conditioning argument), but not independent.
It has long been recognized that price returns are essentially uncorrelated, but
not independent \cite{BaillieReview, Mandelbrot}.  Hence LTBM, the scaling limit
of a CTRW with (weakly) correlated price jumps, may be useful to model financial
price returns. This approach could provide an interesting alternative to the
subordinated variance-Gamma model of Madan and Seneta \cite{VGmodel, CGMY} or
the FATGBM model of Heyde \cite{HeydeFATGBM}.

LTBM has a close connection to fractional partial
differential equations.  Meerschaert and Scheffler \cite{limitCTRW} and Baeumer
and Meerschaert \cite{fracCauchy} showed
that the probability density $u(x,t)$ of LTBM solves the fractional Cauchy problem.
\begin{equation}\label{frac-derivative-0}
\partial_t^\beta u(t,x) =
\partial_x^2 u(t,x) .
\end{equation}
Baeumer, Meerschaert and Nane \cite{bmn-07} further showed that the density of
the iterated Brownian motion solves the same equation \eqref{frac-derivative-0}.
As we mentioned at the end of Section 2, the connection between the limit
processes in this paper and fractional partial differential equations remains to
be investigated.

\bigskip


\begin{thebibliography}{99}





\bibitem{Astrauskus}
A. Astrauskas,  Limit theorems for sums of linearly generated
random variables. {\it Lithuanian Math. J.} {\bf 23} (1983), 127--134.

\bibitem{AvramTaqqu92}
F. Avram and M. S. Taqqu, Weak convergence of sums of moving averages
in the $\alpha$-stable domain of attraction.  {\it Ann. Probab.} {\bf 20}
(1992), 483--503.






\bibitem{fracCauchy}
B. Baeumer and M. M. Meerschaert,
Stochastic solutions for fractional Cauchy problems.
\emph{Fractional Calculus Appl. Anal.} {\bf 4} (2001), 481--500.

\bibitem{bmn-07}
B. Baeumer, M. M. Meerschaert and E. Nane,
Brownian subordinators and fractional Cauchy
problems. (2007), \emph{Trans. Amer. Math. Soc.} (to appear).

\bibitem{BaillieReview}
R. T. Baillie, T. Bollerslev and H. O. Mikkelsen, Fractionally integrated
generalized autoregressive conditional heteroskedasticity.
{\it J. Econometrics} {\bf 74} (1996), 3--30.




\bibitem{CTRW}
P. Becker-Kern, M. M. Meerschaert and H. P. Scheffler, Limit theorems for
continuous-time random walks with two scales. {\it J. Appl. Probab.} {\bf 41}
(2004), 455-466.




\bibitem{Berkowitz2006}
B. Berkowitz, A. Cortis, M. Dentz and H. Scher, Modeling non-Fickian transport
in geological formations as a continuous time random walk. {\it Rev. Geophys.}
{\bf 44}  (2006), RG2003, doi:10.1029/2005RG000178.

\bibitem{Bertoin96}
J. Bertoin, {\it L\'evy Processes}. Cambridge University Press, 1996.


\bibitem{billingsley}
P. Billingsley, \emph{Convergence of Probability Measures.} John Wiley, New York, 1968.

\bibitem{bingham}
N. H. Bingham, Limit theorems for occupation times of Markov processes.
\emph{Z. Wahrsch. verw Gebiete} {\bf 17} (1971), 1--22.

\bibitem{Bingham73}
N. H. Bingham, Maxima of sums of random variables and suprema of stable processes.
{\it  Z. Wahrsch. verw Gebiete} {\bf 26} (1973), 273--296.





\bibitem{burdzy1}
K. Burdzy, Some path properties of iterated Brownian
motion. In: \emph{Seminar on Stochastic Processes} (E. \c{C}inlar, K.L.
Chung and M.J. Sharpe, eds.), pp. 67--87, Birkh\"{a}user, Boston, 1993.


\bibitem{burdzy2}
K. Burdzy, Variation of iterated Brownian motion. In:
\emph{Workshops and Conference on Measure-valued
Processes, Stochastic Partial Differential Equations and
Interacting Particle Systems} (D.A. Dawson, ed.), pp. 35--53,
 Amer. Math. Soc. Providence, RI, 1994.



\bibitem{CGMY}
P. Carr, H. Geman, D. Madan and M. Yor, The fine structure of asset returns:
An empirical investigation. {\it J. Business} {\bf 75} (2002), 305–-332.




\bibitem{CCFR2}
E. Cs\'{a}ki, A. F\"{o}ldes and P. R\'{e}v\'{e}sz, Strassen theorems
for a class of iterated processes. \emph{Trans. Amer. Math. Soc.} {\bf 349}
(1997), 1153--1167.




\bibitem{davydov}
Y. Davydov, The invariance principle for stationary processes. {\it Teor.
Verojatnost. i Primenen.} {\bf 15} (1970), 498--509.








\bibitem{EmbrechtsMaejima}
P. Embrechts and M. Maejima, {\it Selfsimilar Processes}.
Princeton University Press, Princeton, NJ,  2002.


\bibitem{Feller}
W. Feller, {\it An Introduction to Probability Theory and Its Applications, Vol. II}.
2nd Ed., Wiley, New York, 1971.




\bibitem{Giriatis03}
L. Giraitisa, P. Kokoszka, L. Remigijus and
G. Teyssi$\rm\grave{e}$re,  Rescaled variance and related tests for long
memory in volatility and levels.  {\it J. Econometrics} {\bf 112} (2003),
265--294.






\bibitem{HeydeFATGBM}
C. C. Heyde, On modes of long-range dependence. {\it J. Appl. Probab.} {\bf 39}
(2002), 882--888.








\bibitem{kasahara-maejima}
Y. Kasahara and M. Maejima, Weighted sums of I.I.D random variables
attracted to integrals of stable processes. {\it Probab. Th. Rel. Fields}
{\bf 78} (1988), 75--96.







\bibitem{VGmodel}
D. B. Madan and E. Seneta, The variance gamma (VG) model for share market returns.
{\it J. Business} {\bf 63} (1990), 511–-524.

\bibitem{Mandelbrot}
 B. B. Mandelbrot (1963) The variation of certain speculative prices.
{\it J. Business} {\bf 36}, 394--419.


\bibitem{RVbook}
M. M. Meerschaert and H. P. Scheffler, {\it Limit Distributions for
Sums of Independent Random Vectors: Heavy Tails in Theory and Practice}.
 Wiley Interscience, New York, 2001.

\bibitem{Zsolution}
M. M. Meerschaert, D. A. Benson, H. P. Scheffler and B. Baeumer,
Stochastic solution of space-time fractional diffusion equations.
{\it Phys. Rev. E} {\bf 65} (2002), 1103--1106.


\bibitem{MNX}
M. M. Meerschaert, E. Nane and Y. Xiao,  Large deviations for
local time fractional Brownian motion and applications. {\it J. Math.
Anal. Appl.} {\bf 346} (2008), 432--445.

\bibitem{limitCTRW}
M. M. Meerschaert and H. P. Scheffler,
Limit theorems for continuous time random walks with infinite mean
waiting times. {\it J. Appl. Probab.} {\bf 41} (2004), 623--638.



\bibitem{MetzlerKlafter}
R. Metzler and J. Klafter, The restaurant at the end of the random walk:
recent developments in the description of anomalous transport by fractional dynamics.
{\it J. Physics A} {\bf 37} (2004),  R161--R208.

\bibitem{MillerRoss}
K. Miller and B. Ross, {\it An Introduction to the Fractional Calculus and
Fractional Differential Equations}. Wiley and Sons, New York, 1993.
















\bibitem{Samko}
S. Samko, A. Kilbas and O. Marichev, {\it Fractional Integrals and Derivatives:
Theory and Applications}. Gordon and Breach, London, 1993.


\bibitem{SGM}
E. Scalas, R. Gorenflo and F. Mainardi, Fractional calculus and continuous-time
finance. {\it Phys. A} {\bf 284} (2000), 376--384.

\bibitem{scalas1}
E. Scalas, Five years of continuous-time random walks in econophysics. In:
{\it Proceedings of WEHIA 2004}, A. Namatame (ed.), Kyoto, 2004.

\bibitem{ST94}
G.~Samorodnitsky and M.~S. Taqqu, \emph{Stable Non-{G}aussian Processes}.
Chapman and Hall, 1994.

\bibitem{seneta}
E. Seneta, {\it Regularly Varying Functions}. Lecture
Notes Math. {\bf 508}, Springer, Berlin, 1976.






\bibitem{taqqu}
M. S. Taqqu, Weak Convergence to fractional Brownian
motion and to the Rosenblatt process. \emph{Z. Wahrsch. Verw.
Gebiete} {\bf 31} (1975), 287--302.



\bibitem{whitt}
W. Whitt, \emph{Stochastic-Process Limits}. Springer, New York, 2002.


\end{thebibliography}
\end{document}